\theoremstyle{plain}
\newtheorem{theorem}{Theorem}[section]
\newtheorem{proposition}[theorem]{Proposition}
\newtheorem{lemma}[theorem]{Lemma}
\theoremstyle{definition}
\newtheorem{definition}[theorem]{Definition}
\theoremstyle{remark}
\newtheorem{remark}[theorem]{Remark}
\begin{document}

\title[Global weak solutions for a two-component Novikov system]
{Global weak solutions for a two-component Novikov system}

\author{Zhigang Li}
\address{College of sciences\\China University of Mining and Technology\\100083, Beijing, People's Republic of China}
\email{lzgcumtb@163.com}

\keywords{two-component Novikov system, weak solutions, global existence}

\begin{abstract}
In this paper, we mainly consider about the existence and uniqueness of global weak solutions for the two-component Novikov system. We first recall some results and definitions of strong solutions and weak solutions for the system, then by using the method of approximation of smooth solutions, we prove the existence and uniqueness of global weak solutions of the system.

Correspondence should be addressed to Zhigang Li; lzgcumtb@163.com
\end{abstract}

\maketitle

\section{Introduction}
In this paper, we consider the following two-component Novikov system (2NS) :
\begin{equation*}
\begin{cases}
m_t+3u_xvm+uvm_x=0,\\
n_t\ +3v_xun\ +uvn_x\ =0,\\
m=u-u_{xx}, n=v-v_{xx}.
\end{cases}
\end{equation*}
This system firstly appeared in reference [1] by Geng and Xue, who derived by arising the zero curvature equation
$$U_t-V_x+[U,V]=0$$
which equivalent to the compatibility condition of the $3 \times 3$ spectral system
$$\Phi_x=U\Phi,\ \ \ \Phi_t=V\Phi,$$
where the spacial part $U$ is
\begin{equation*}
U=\left(\begin{matrix}
0 & \lambda m & 1\\
0 & 0 & \lambda n\\
1 & 0 & 0
\end{matrix}\right),
\end{equation*}
and the time part $V$ is
\begin{equation*}
V=\left(\begin{matrix}
-u_xv+\frac{1}{3\lambda^2} & -\lambda uvm +\frac{u_x}{\lambda} & u_xv_x\\
-\frac{v}{\lambda} & u_xv-uv_x-\frac{2}{3\lambda^2} & -\lambda uvn-\frac{v_x}{\lambda}\\
-uv & \frac{u}{\lambda} & uv_x+\frac{1}{3\lambda^2}
\end{matrix}\right),
\end{equation*}
here $\lambda \in \mathbb{R}$ is spectral parameter.

The authors proved such system is integrable, and they also supplied many significant results, such as infinite many conserved quantities, Hamiltonian structure, and explicit multi-peakon traveling wave solutions. It should be noted that the bi-Hamiltonian structure was found by Li and Liu[2], which means 2NS is also integrable in Liouville sense. The single peakon solutions of 2NS are given by
$$u_c(t,x)=v_c(t,x)=\sqrt{c}e^{-|x-ct|},$$
and the periodic case are given by
$$u_p(t,x)=v_p(t,x)=\frac{\sqrt{c}}{\cosh \pi}\cosh\left(x-ct-2\pi[\frac{x-ct}{2\pi}]-\pi\right),$$
with periodic $2\pi$. More details on the derivation of peakons can be found in [3].

As 2NS is a multi-component system, we can exploit some reductions to reduce it into some single-component equations. A nature reduction is taking $u=v$, which is just the Novikov equation (NE) [4] with cubic nonlinearities,
\begin{equation*}
m_t+u^2m_x+3uu_xm=0,\ \ \ m=u-u_{xx},
\end{equation*}
it was firstly derived by Novikov via the symmetry classification method. The integrability of NE was shown by Hone and Wang, who proved that it is integrable both in Lax and Liouville sense. The Lax pair of NE can be obtained from one of 2NS by choosing $u=v$. More over, it also admits bi-Hamiltonian structure, infinitely many conserved quantities [5]. They also showed that NE is associated to the negative flow in the Sawada-Kotera hierarchy. The local well-posedness and ill-posedness of such equation can refer to [6-7], while for periodic case and for $s > \frac{5}{2}$, well-posedness had been proved by Tiglay [8], and the blow-up phenomena has studied in [9]. The global weak solutions of NE should be referred to [10].

Another important reduction of 2NS is Degasperis-Procesi equation (DPE) [11] if we take $v=1$,
$$m_t+um_x+3u_xm=0, \ \ \ m=u-u_{xx}.$$
It was proposed by Degasperis and Procesi, who considered the asymptotic integrability to the following dispersive PDE,
\begin{equation*}
u_t- \alpha^2u_{xxt} + \gamma u_{xxx} +c_0u_x =(c_1u^2+c_2u_x^2+c_3uu_{xx})_x.
\end{equation*}
In fact, if we change the coefficient of term $u_xm$ to 2 in DPE, it is just the famous Camassa-Holm equation (CHE) [12-14]. DPE is another integrable equation of b-family for $b=3$. As the same as CHE, DPE also arises bi-Hamiltonian structure, infinite many conserved quantities and peaked solutions, and it is connected with a negative flow in the Kaup-Kupershmidt hierarchy throw reciprocal transformation [15]. The well-posedness and stability of DPE have been shown in [16-17]. The global weak solutions of CHE has been studied in [18].

Himonas and Mantzavinos have illustrated 2NS is local well-posed if $u_0,v_0 \in H^s$ with $s>\frac{3}{2}$ in convolution form [19], which is equivalent to $m_0,n_0 \in H^s$ for $s>\frac{1}{2}$, and we have just proved if the initial data $m_0$ and $n_0$ do not change sign, then the corresponding strong solution $(m,n)$ exists globally in time [20]. While 2NS is ill-posed if $s<\frac{3}{2}$, which has been proven by Himonas, Holliman and Kenig in [21]. Thus, a nature question is how to modify the initial condition to guarantee there exists a unique weak solution globally in time.

The structure of our paper is organized as follows. In Section 2, we recall some basic definitions and lemmas which will be used in the sequel. We study the existence and uniqueness of the global weak solution of 2NS In section 3.

\textbf{Notations:} For $x$ variable, all the function space is over $\mathbb{R}$, and we drop $\mathbb{R}$ in our notations of function spaces if there is no ambiguity. $C_T$ represents a positive constant which depends only on $T$ and some norms of the initial data, and may be changed from line to line.

\section{Strong solutions and some priori estimates}
In this section, we recall some crucial lemmas and global existence of strong solutions to 2NS. For convenience, we also offer some priori estimates in this section. Note that the kernel of $\mathcal{S}^{-2}$ operator $(1-\partial_x^2)^{-1}$ is $g(x)=\frac{1}{2}e^{-|x|}$, which means $(1-\partial_x^2)^{-1}f=g*f$ for any function $f \in L^2$. Thus we can rewrite 2NS into the following convolution form,
\begin{equation}
\begin{cases}
u_t+uu_xv+g(x)*(3uu_xv+u_x^2v_x+uu_{xx}v_x)+\partial_x g(x)*uu_xv_x=0,\\
v_t+uvv_x+g(x)*(3uvv_x+u_xv_x^2+u_xvv_{xx})+\partial_x g(x)*u_xvv_x=0,
\end{cases}
\end{equation}

\begin{definition}
If $u(t,x),v(t,x) \in [C([0,T); H^s)\bigcap C^1([0,T); H^{s-1})]^2$ with $s>\frac{3}{2}$ is a solution to 2NS, then $u(t,x), v(t,x)$ is called a strong solution to 2NS.
\end{definition}

\begin{definition}
Assume $(u_0,v_0) \in H^s \times H^s$ with $s \in[0,\frac{3}{2}]$. If $(u(t,x),v(t,x))$ satisfies
\begin{equation*}
\begin{aligned}
&\int_0^T\int_\mathbb{R}\left(u\varphi_t-[uvu_x+g*(3uvu_x+u_x^2v_x+uu_{xx}v_x)+g_x*uu_xv_x]\varphi\right){\rm d}x{\rm d}t+\int_\mathbb{R}u_0\varphi(t,x){\rm d}x=0,\\
&\int_0^T\int_\mathbb{R}\left(v\varphi_t-[uvv_x+g*(3uvv_x+u_xv_x^2+u_xvv_{xx})+g_x*u_xvv_x]\varphi\right){\rm d}x{\rm d}t+\int_\mathbb{R}v_0\varphi(t,x){\rm d}x=0,\\
\end{aligned}
\end{equation*}
for any test function $\varphi \in C_c^\infty([0,T\times \mathbb{R})$, then $u(t,x), v(t,x)$ is called a weak solution tow 2NS.
\end{definition}

\begin{proposition}$[10]$
~

(1) Every strong solution is a weak solution.

(2) If $(u,v)$ is a weak solution and $u(t,x),v(t,x) \in [C([0,T); H^s)\bigcap C^1([0,T); H^{s-1})]^2$ with $s>\frac{3}{2}$, then it is a strong solution.
\end{proposition}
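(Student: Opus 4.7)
My plan is to treat the two parts by a single symmetric device: pair the convolution form (1) against an arbitrary test function, integrate by parts in time, and observe that every nonlinear and convolution term lives in a function space in which the manipulation is licit under either set of hypotheses.

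For part (1), starting from a strong solution $(u,v)\in[C([0,T);H^s)\cap C^1([0,T);H^{s-1})]^2$ with $s>\tfrac32$, the equation (1) holds as an identity in $C([0,T);H^{s-1})$. I would multiply each component equation by $\varphi\in C_c^\infty([0,T)\times\mathbb R)$, integrate over $(0,T)\times\mathbb R$, and shift the time derivative onto $\varphi$ via integration by parts; the $t=0$ boundary contribution produces the term $\int_\mathbb R u_0\,\varphi(0,x)\,dx$ (interpreting the $\varphi(t,x)$ appearing in Definition 2.2 as $\varphi(0,x)$) and reproduces the weak identity exactly. The only thing to check is that each product $uvu_x$, $u_x^2 v_x$, $uu_{xx}v_x$, $uu_xv_x$ and the analogues for the $v$-equation, together with their $g$- and $g_x$-convolutions, is locally integrable in $(t,x)$; this follows from the algebra property of $H^{s-1}$ for $s-1>\tfrac12$ and from $g,g_x\in L^1\cap L^\infty$.

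For part (2) I would reverse this argument. Given a weak solution with the stated higher regularity, the same algebra property shows that every nonlinear and convolution term already lies in $C([0,T);H^{s-1})$, so the time integration by parts can be undone to yield
\begin{equation*}
\int_0^T\!\!\int_\mathbb R\bigl[u_t+uvu_x+g*(3uvu_x+u_x^2v_x+uu_{xx}v_x)+g_x*uu_xv_x\bigr]\varphi\,dx\,dt=0
\end{equation*}
for every $\varphi\in C_c^\infty((0,T)\times\mathbb R)$, with an analogous identity for $v$. The fundamental lemma of the calculus of variations then delivers (1) as an equality in $C([0,T);H^{s-1})$. To recover the initial condition I would test against $\varphi(t,x)=\chi_\varepsilon(t)\psi(x)$, with $\chi_\varepsilon$ an approximation of $\delta_0$ and $\psi\in C_c^\infty(\mathbb R)$ arbitrary, and pass to the limit using the continuity $u,v\in C([0,T);H^{s-1})$.

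The only step demanding any care is the bookkeeping needed to verify that each of the five or six nonlinear terms per equation is well-defined and time-continuous at the regularity threshold $s>\tfrac32$; this is the main, though essentially routine, technical obstacle. Once it is in place both directions collapse to the standard duality between a pointwise PDE in $H^{s-1}$ and its distributional formulation.
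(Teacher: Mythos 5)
The paper does not prove this proposition at all: it is quoted verbatim from reference [10] (Wu--Yin) and used as a black box, so there is no in-paper argument to compare yours against. Your proposal is the standard proof of such equivalences and is essentially correct: integrate the convolution form (1) against a test function and move the time derivative onto $\varphi$ to get part (1), and for part (2) use the assumed $C^1([0,T);H^{s-1})$ regularity to undo that integration by parts, apply the du Bois-Reymond lemma to recover (1) as an identity in $H^{s-1}$, and recover the initial datum by testing with an approximate identity in $t$. The one place where your justification is slightly too glib is the term $uu_{xx}v_x$: for $\tfrac32<s<2$ one has $u_{xx}\in H^{s-2}$ with $s-2<0$, so ``the algebra property of $H^{s-1}$'' does not apply directly; you need the bilinear estimate $H^{s-1}\cdot H^{s-2}\subset H^{s-2}$ (valid exactly when $s-1>\tfrac12$ and $(s-1)+(s-2)>0$, i.e.\ $s>\tfrac32$), after which $g\ast$ lifts the product back to $H^{s}$ and the pairing with $\varphi$ is legitimate. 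With that product estimate substituted for the algebra property, the argument closes.
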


\begin{lemma}$[18]$
Let $T>0$. If
$$f,g \in L^2((0,T);H^1)\ \ \  and\ \ \ \frac{{\rm d}f}{{\rm d}t},\frac{{\rm d}g}{{\rm d}t} \in L^2((0,T);H^{-1}),$$
then $f$, $g$ are a.e. equal to a function continuous from $[0,T]$ into $L^2$ and
$$<f(t),g(t)>-<f(s),g(s)>=\int_s^t\Big\langle\frac{{\rm d}f(\tau)}{{\rm d}\tau},g(\tau)\Big\rangle {\rm d}\tau+\int_s^t\Big\langle\frac{{\rm d}g(\tau)}{{\rm d}\tau},f(\tau)\Big\rangle {\rm d}\tau$$
for all $s,t \in [0.T]$, where $< , >$ denote the duality paring between $H^1$ and $H^{-1}$.
\end{lemma}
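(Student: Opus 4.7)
The statement is the standard Lions--Magenes type integration-by-parts lemma for the Gelfand triple $H^1 \hookrightarrow L^2 \hookrightarrow H^{-1}$, and the plan is to reproduce the classical argument by smooth approximation in the time variable. Concretely, I would view $f,g$ as abstract vector-valued functions and exploit the fact that $L^2$ is precisely the pivot space in which the duality pairing $\langle\cdot,\cdot\rangle_{H^{-1},H^1}$ restricts to the $L^2$ inner product, so that the desired identity is morally the fundamental theorem of calculus applied to $\tau\mapsto\langle f(\tau),g(\tau)\rangle$.

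First I would establish the continuity assertion. The plan is to extend $f$ to a function $\tilde f$ defined on all of $\mathbb{R}$ with values in $H^1$ (for instance by reflection across $t=0$ and $t=T$, multiplied by a smooth cut-off), so that $\tilde f \in L^2(\mathbb{R};H^1)$ and $d\tilde f/dt \in L^2(\mathbb{R};H^{-1})$. Convolving with a standard mollifier $\rho_\varepsilon$ in time gives $f_\varepsilon := \rho_\varepsilon * \tilde f$, which is smooth in $t$ with values in $H^1$. For such smooth functions, $\tfrac{d}{dt}\|f_\varepsilon(t)\|_{L^2}^2 = 2\langle df_\varepsilon/dt,f_\varepsilon\rangle$ pointwise, and integrating yields
\begin{equation*}
\|f_\varepsilon(t)\|_{L^2}^2 - \|f_\varepsilon(s)\|_{L^2}^2 = 2\int_s^t \langle df_\varepsilon/d\tau, f_\varepsilon(\tau)\rangle\,d\tau.
\end{equation*}
Passing $\varepsilon\to 0$, the right-hand side converges in $L^1_{\mathrm{loc}}$, so $\|f_\varepsilon(t)\|_{L^2}^2$ converges uniformly in $t$ to a continuous limit; the same argument applied to $f_\varepsilon-f_{\varepsilon'}$ shows $\{f_\varepsilon\}$ is Cauchy in $C([0,T];L^2)$, and its limit must coincide a.e.\ with $f$. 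The same reasoning applies to $g$.

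Having the continuous representatives in hand, I would next prove the bilinear identity by the same mollification scheme applied simultaneously to $f$ and $g$. For the smooth approximants the product rule gives
\begin{equation*}
\langle f_\varepsilon(t),g_\varepsilon(t)\rangle - \langle f_\varepsilon(s),g_\varepsilon(s)\rangle = \int_s^t\langle df_\varepsilon/d\tau,g_\varepsilon(\tau)\rangle\,d\tau + \int_s^t\langle dg_\varepsilon/d\tau,f_\varepsilon(\tau)\rangle\,d\tau,
\end{equation*}
which is unambiguous since everything is $L^2$-valued. The main technical point, and the step that requires the most care, is the passage to the limit in the duality pairings on the right: one needs $df_\varepsilon/d\tau \to df/d\tau$ strongly in $L^2(0,T;H^{-1})$ together with $g_\varepsilon \to g$ strongly in $L^2(0,T;H^1)$, so that their pairing converges in $L^1(0,T)$. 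Both convergences are standard properties of mollification in Bochner spaces, provided the extensions are chosen so as not to create jumps in the distributional derivative at $t=0$ and $t=T$; this is the only delicate point, and it is handled by using a compactly supported extension and restricting attention to $[s,t]\subset(0,T)$ before letting $s,t$ approach the endpoints using the continuity already established. The left-hand side converges pointwise by the $C([0,T];L^2)$ continuity, which completes the proof.
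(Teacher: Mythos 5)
Your proposal is correct: it is the standard Lions--Magenes/Temam argument (extension, time mollification, product rule for the smooth approximants, passage to the limit using strong convergence of $f_\varepsilon$ in $L^2(0,T;H^1)$ and of $df_\varepsilon/dt$ in $L^2(0,T;H^{-1})$ on compact subintervals). The paper itself gives no proof of this lemma --- it is quoted verbatim from reference [18] (Constantin and Molinet), where it is established by essentially the same mollification argument you outline --- so your approach matches the source, and you correctly identify the one delicate point, namely avoiding spurious boundary contributions to the distributional time derivative when extending past $t=0$ and $t=T$.
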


Throughout this paper, let $\{\rho_n\}_{n=1}^\infty$ denote the mollifiers
$$\rho_n(x)=\left(\int_\mathbb{R}\rho(x){\rm d}x\right)^{-1}n\rho(nx),\ \ \ x \in \mathbb{R}, n \in \mathbb{N}^*,$$
where $\rho \in C_c^\infty$ is defined by
\begin{equation*}
\rho(x)=\begin{cases}e^{\frac{1}{x^2-1}},\ \ \ &|x|<1,\\
0,&|x| \ge 1.
\end{cases}
\end{equation*}

Next, we recall some crucial approximations results of convolution calculus.
\begin{lemma}$[18]$
Suppose $f:\mathbb{R} \rightarrow \mathbb{R}$ be uniformly continuous and bounded. If $g\in L^p$ with $1 \le p \le \infty$, then we have
$$[\rho_n *(fg)-(\rho_n*f)(\rho_n*g)]\rightarrow 0,$$
as $n\rightarrow \infty$ in $L^p$.
\end{lemma}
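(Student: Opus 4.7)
My plan is to rewrite the difference as a commutator whose size is controlled by the modulus of continuity of $f$, and then pass to $L^p$ via Young's inequality.

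First I would establish an algebraic identity: adding and subtracting $f(x)(\rho_n * g)(x)$ and using $\int_{\mathbb{R}} \rho_n(x-y)\,dy = 1$, one obtains
\begin{equation*}
[\rho_n*(fg) - (\rho_n*f)(\rho_n*g)](x) = \int_{\mathbb{R}} \rho_n(x-y)[f(y)-f(x)]g(y)\,dy - (\rho_n*g)(x)\int_{\mathbb{R}} \rho_n(x-y)[f(y)-f(x)]\,dy.
\end{equation*}
This is a standard commutator representation that transfers the smoothing error onto the slowly varying factor $f$.

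Second, I would invoke the uniform continuity of $f$: for any $\varepsilon > 0$, pick $\delta > 0$ such that $|x-y| < \delta$ implies $|f(x)-f(y)| < \varepsilon$. Since the support of $\rho_n$ lies in $[-1/n, 1/n]$, once $n > 1/\delta$ the integration variable $y$ satisfies $|x-y| < \delta$ on the support of $\rho_n(x-\cdot)$, so $|f(y)-f(x)| < \varepsilon$ uniformly in $x$. This yields the two pointwise bounds
\begin{equation*}
\left|\int_{\mathbb{R}} \rho_n(x-y)[f(y)-f(x)]g(y)\,dy\right| \le \varepsilon (\rho_n * |g|)(x), \qquad \left|\int_{\mathbb{R}} \rho_n(x-y)[f(y)-f(x)]\,dy\right| \le \varepsilon.
\end{equation*}

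Taking $L^p$ norms and using Young's inequality together with $\|\rho_n\|_{L^1}=1$, both contributions to the commutator are bounded by $\varepsilon\|g\|_{L^p}$, giving
\begin{equation*}
\|\rho_n*(fg) - (\rho_n*f)(\rho_n*g)\|_{L^p} \le 2\varepsilon \|g\|_{L^p}
\end{equation*}
for every $n > 1/\delta$, and the conclusion follows because $\varepsilon$ was arbitrary.

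The subtle point, and the step I expect to be the main obstacle, is the endpoint $p=\infty$. The customary approximation-identity proof of $\rho_n*h \to h$ in $L^p$ relies on density of $C_c$ in $L^p$, which fails when $p=\infty$. Here uniform (rather than merely pointwise) continuity of $f$ is exactly the hypothesis that forces the pointwise bounds above to hold uniformly in $x$, making the $L^\infty$ estimate as cheap as its $L^p$ analogues for finite $p$. The boundedness of $f$ is used only to guarantee $fg \in L^p$ so that the convolution on the left-hand side is well defined.
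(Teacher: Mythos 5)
Your argument is correct: the commutator identity is verified by adding and subtracting $f(x)(\rho_n*g)(x)$ and using $\int\rho_n=1$, the uniform-continuity bound holds uniformly in $x$ because $\operatorname{supp}\rho_n\subset[-1/n,1/n]$, and Young's inequality then gives the bound $2\varepsilon\|g\|_{L^p}$ for all $1\le p\le\infty$, including the endpoint. The paper itself gives no proof (the lemma is imported from reference [18]), but what you wrote is the standard proof of that cited result, so there is nothing to contrast.
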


\begin{lemma}$[18]$
Suppose $f(t,\cdot) \in W^{1,1}$ is uniformly bounded in $W^{1,1}$ for all $t \in \mathbb{R}_+$. Then for a.e. $t \in \mathbb{R}_+$ and $1 \le p <\infty$, we have
$$\frac{1}{p}\frac{\rm d}{{\rm d}t}\int_\mathbb{R}|\rho_n*f|^p{\rm d}x=\int_\mathbb{R}|\rho_n*f|^{p-1}(\rho_n*f_t){\rm sgn}(\rho_n*f){\rm d}x,$$
and
$$\frac{1}{p}\frac{\rm d}{{\rm d}t}\int_\mathbb{R}|\rho_n*f_x|^p{\rm d}x=\int_\mathbb{R}|\rho_n*f_x|^{p-1}(\rho_n*f_{xt}){\rm sgn}(\rho_n*f_x){\rm d}x.$$
\end{lemma}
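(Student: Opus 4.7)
The idea is to exploit the smoothness in $x$ produced by the mollifier and move the time derivative inside the spatial integral, using the uniform $W^{1,1}$ bound to manufacture the required integrable majorants. Since $\rho_n \in C_c^\infty$, for each fixed $t$ the functions $\rho_n * f(t,\cdot)$ and $\rho_n * f_x(t,\cdot)$ are smooth and rapidly decaying in $x$, and the time derivative commutes with the convolution: $\partial_t(\rho_n * f) = \rho_n * f_t$. In one dimension $W^{1,1} \hookrightarrow L^1 \cap L^\infty$, so the hypothesis furnishes bounds $\|\rho_n * f\|_{L^q}$ and $\|\rho_n * f_x\|_{L^q} \le C$ for every $q \in [1,\infty]$, uniformly in $t$ and $n$. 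This is the pool of estimates from which all subsequent majorants will be drawn.

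For $p > 1$ the function $s \mapsto |s|^p$ is $C^1(\mathbb{R})$ with derivative $p|s|^{p-1}{\rm sgn}(s)$, so the chain rule gives pointwise in $(t,x)$
$$\frac{\partial}{\partial t}|\rho_n * f|^p = p\,|\rho_n * f|^{p-1}\,{\rm sgn}(\rho_n * f)\,(\rho_n * f_t).$$
The right-hand side is bounded in absolute value by $p\,\|\rho_n * f\|_{L^\infty}^{p-1}|\rho_n * f_t|$, which lies in $L^1_x$ uniformly on compact time intervals (since $\rho_n * f_t \in L^1_x$ follows from $\rho_n \in L^\infty \cap L^1$ and the $W^{1,1}$ control via duality, or by first proving the identity for smooth $f$ and approximating). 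Lebesgue dominated convergence then justifies interchanging $\tfrac{\rm d}{{\rm d}t}$ and $\int_\mathbb{R}{\rm d}x$, yielding the first identity. The second identity is obtained by the same argument with $f$ replaced by $f_x$ throughout.

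For $p = 1$ the absolute value is only Lipschitz at the origin, so I would regularize by setting $\phi_\epsilon(s) := \sqrt{s^2 + \epsilon^2}$. Applying the $p > 1$ argument to $\phi_\epsilon(\rho_n * f)$ yields
$$\frac{{\rm d}}{{\rm d}t}\int_\mathbb{R}\phi_\epsilon(\rho_n * f)\,{\rm d}x = \int_\mathbb{R}\frac{\rho_n * f}{\sqrt{(\rho_n * f)^2 + \epsilon^2}}\,(\rho_n * f_t)\,{\rm d}x.$$
Since $\phi_\epsilon(s) \to |s|$ uniformly (with $|\phi_\epsilon(s) - |s|| \le \epsilon$) and $\phi_\epsilon'(s) \to {\rm sgn}(s)$ pointwise with $|\phi_\epsilon'| \le 1$, integrating this identity in $t$ over $[t_0, t_1]$ and then sending $\epsilon \to 0^+$ by dominated convergence (majorant $|\rho_n * f_t|$ on the right, $|\rho_n * f| + \epsilon$ on the left) gives the integrated form of the claim; differentiating in $t_1$ at a Lebesgue point recovers the pointwise identity for a.e.\ $t$.

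The main obstacle is the $p = 1$ case and, more broadly, the rigorous interchange of $\tfrac{\rm d}{{\rm d}t}$ with $\int_\mathbb{R}{\rm d}x$ on an unbounded domain where naive dominated convergence can fail. The \emph{uniform} $W^{1,1}$ hypothesis is precisely what saves the day: through $W^{1,1}(\mathbb{R}) \hookrightarrow L^\infty$ it supplies $t$-independent (and $\epsilon$-independent) $L^p$ bounds for $\rho_n * f$ and $\rho_n * f_x$, so that every dominated convergence step admits a universal majorant.
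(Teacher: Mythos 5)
The paper does not actually prove this lemma: it is quoted verbatim from reference [18] (Constantin--Molinet) and used as a black box, so there is no in-paper argument to compare against. Your reconstruction is the standard proof and is essentially sound: mollification in $x$ plus $\partial_t(\rho_n*f)=\rho_n*f_t$, the embedding $W^{1,1}(\mathbb{R})\hookrightarrow L^1\cap L^\infty$ to get $t$- and $n$-uniform $L^q$ bounds, the $C^1$ chain rule for $|s|^p$ when $p>1$, and a regularization of $|\cdot|$ for $p=1$ followed by integration in $t$ and differentiation at Lebesgue points.

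Two points need repair. First, your regularizer $\phi_\epsilon(s)=\sqrt{s^2+\epsilon^2}$ has $\phi_\epsilon(0)=\epsilon>0$, so $\int_{\mathbb{R}}\phi_\epsilon(\rho_n*f)\,{\rm d}x=+\infty$ on the unbounded line and the proposed majorant $|\rho_n*f|+\epsilon$ is not integrable; you must use $\phi_\epsilon(s)=\sqrt{s^2+\epsilon^2}-\epsilon$ (which still has $\phi_\epsilon'\to{\rm sgn}$, $|\phi_\epsilon'|\le 1$, and now $0\le\phi_\epsilon(s)\le|s|$), after which the argument goes through unchanged. Second, your justification that $\rho_n*f_t\in L^1_x$ ``follows from the $W^{1,1}$ control via duality'' does not work: a uniform bound on $f(t,\cdot)$ in $W^{1,1}$ gives no information whatsoever about $f_t$. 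This is really a defect of the lemma as stated --- some hypothesis on $f_t$ (e.g.\ $f_t\in L^1_{loc}(\mathbb{R}_+;L^1)$, or that $f$ solves the PDE so that $f_t$ is a sum of derivatives of $L^1$ functions, whence $\rho_n*f_t=\rho_{n,x}*(\cdots)+\cdots\in L^1_x$) is needed for the right-hand side even to be finite, and you should state the assumption you are using rather than deriving it from the $W^{1,1}$ bound. With those two adjustments the proof is complete.
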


\begin{lemma}$[20]$
Suppose $u_0,v_0 \in H^s$, with $s \ge 3$, $m_0=(1-\partial_x^2)u_0$ and $n_0=(1-\partial_x^2)v_0$ are non-negative, then 2NS has a unique global strong solution
$$(u(t,x),v(t,x)) \in C(\mathbb{R}_+;H^s)\cap C^1(\mathbb{R}_+;H^{s-1}).$$
Moreover, $E(u,v)=\int_\mathbb{R}\Big(u(t,\cdot)v(t,\cdot)+u_x(t,\cdot)v_x(t,\cdot)\Big){\rm d}x$ is a conservation law.
\end{lemma}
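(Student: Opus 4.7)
The plan is to combine the local well-posedness from [19] with (a) sign preservation of $m$ and $n$ along characteristics, (b) a direct verification of the conservation law $E$, and (c) an energy-type continuation argument; this follows the template familiar from Camassa--Holm-type equations, adapted to the coupled system. First I would apply [19] to obtain a unique local solution $(u,v) \in C([0,T^*);H^s) \cap C^1([0,T^*);H^{s-1})$ on a maximal interval $[0,T^*)$, from which uniqueness is inherited, reducing the task to ruling out $T^* < \infty$.

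Next, I would introduce the characteristic flow $\dot q(t,x) = (uv)(t,q)$, $q(0,x) = x$. Writing the first equation of 2NS in transport form $m_t + uv\,m_x = -3 u_x v\,m$ and evaluating along $q$ gives
$$m(t, q(t,x)) = m_0(x)\exp\!\Big(-3 \int_0^t (u_x v)(s, q(s,x))\,ds\Big),$$
so the hypothesis $m_0 \geq 0$ forces $m(t, \cdot) \geq 0$ throughout $[0, T^*)$, and analogously $n(t, \cdot) \geq 0$. Because $u = g*m$ and $|g_x(x)| = g(x)$ almost everywhere, the non-negativity of $m$ yields the pointwise domination $|u_x| \leq u$ (in particular $u \geq 0$), and symmetrically $|v_x| \leq v$.

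For conservation of $E(u,v)$, integrating by parts gives $\frac{d}{dt} E = \int (m_t\,v + n_t\,u)\,dx$. Substituting the $m$- and $n$-equations and integrating the transport terms by parts further collects everything into $2 \int (uv_x - u_x v)(vm - un)\,dx$. The algebraic identity $(uv_x - u_x v)_x = uv_{xx} - u_{xx} v = vm - un$ then exhibits the integrand as the exact derivative of $\tfrac12 (uv_x - u_x v)^2$, and decay at spatial infinity closes the argument, so $E(u(t),v(t)) \equiv E(u_0, v_0)$ on $[0, T^*)$.

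For the global extension I would integrate the $m$-equation over $\mathbb{R}$ and use $|u_x| \leq u$, $|v_x| \leq v$ to obtain $\bigl|\frac{d}{dt}\int m\,dx\bigr| \leq 3 \int uv\,m\,dx \leq 3 \|u\|_{L^\infty} E_0$. Since $u, v \geq 0$ and $|u_x| \leq u$, the Sobolev-type chain $\|u\|_{L^\infty}^2 \leq \|u\|_{H^1}^2 \leq 2 \|u\|_{L^2}^2 \leq 2 \|u\|_{L^\infty} \|u\|_{L^1}$ gives $\|u\|_{L^\infty} \leq 2 \|u\|_{L^1} = 2 \int m\,dx$, so Gronwall produces at most exponential-in-$t$ growth of $\int m\,dx$, hence of $\|u\|_{W^{1,\infty}}$ and $\|v\|_{W^{1,\infty}}$ on any compact time interval. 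A standard commutator-type $H^s$ energy estimate $\frac{d}{dt}(\|u\|_{H^s}^2 + \|v\|_{H^s}^2) \leq C(\|u\|_{W^{1,\infty}} + \|v\|_{W^{1,\infty}})(\|u\|_{H^s}^2 + \|v\|_{H^s}^2)$ followed by Gronwall then rules out finite-time blow-up and yields $T^* = \infty$. The main obstacle is precisely this $L^\infty$ closure: $E$ on its own is indefinite, and only the conjunction of the two non-negativity assumptions with the pointwise dominations $|u_x| \leq u$, $|v_x| \leq v$ turns the single conserved quantity into a genuine scalar Gronwall driver controlling both components simultaneously.
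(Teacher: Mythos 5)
This lemma is quoted from reference [20] and the paper supplies no proof of it, so there is no in-paper argument to compare against line by line; the closest the paper comes is the proof of Lemma 2.8, whose computation $\tfrac{\rm d}{{\rm d}t}\int_\mathbb{R} m^p\,{\rm d}x=\int_\mathbb{R}(uv_x-2u_xv)m^p\,{\rm d}x$ is exactly your $p=1$ identity. Your proposal is the standard route for Camassa--Holm-type global existence and the key steps check out: the transport form $m_t+uvm_x=-3u_xvm$ does give sign preservation along the flow of $\dot q=(uv)(t,q)$; $|g_x|=g$ a.e.\ does yield $|u_x|\le u$ and $|v_x|\le v$ when $m,n\ge 0$; the identity $vm-un=(uv_x-u_xv)_x$ correctly reduces $\tfrac{\rm d}{{\rm d}t}E=2\int(uv_x-u_xv)(vm-un)\,{\rm d}x$ to a total derivative; and the observation $\int vm\,{\rm d}x=\int un\,{\rm d}x=E_0$ together with $\|u\|_{L^\infty}\le 2\|u\|_{L^1}=2\int m\,{\rm d}x$ closes the Gronwall loop for $\|m\|_{L^1}$ and hence for $\|u\|_{W^{1,\infty}}$ (and symmetrically for $n$ and $v$). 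Two minor points: the final $H^s$ energy inequality for this cubic nonlinearity should carry the product $\|u\|_{W^{1,\infty}}\|v\|_{W^{1,\infty}}$ (or the square of the sum) rather than the first power, though this does not affect the conclusion since both norms are bounded on compact time intervals; and you should state explicitly that the two Gronwall arguments for $\int m\,{\rm d}x$ and $\int n\,{\rm d}x$ run independently, each driven by the same conserved quantity $E_0$, which is precisely the point you flag at the end about $E$ alone being indefinite.
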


At the end of this section, we offer some priori estimates for $u$, $v$ and potentials $m$, $n$.
\begin{lemma}$[20]$
Suppose $u_0,v_0 \in H^3$ and $m_0, n_0 \in L^p$ with $1 \le p \le \infty$ are non-negative, then there exists some constant $C>0$ only depends on the maximal existence time $T>0$ and initial data $(u_0,v_0)$, such that
\begin{equation*}
\begin{aligned}
(i)&\|u_x(t,\cdot)\|_{L^\infty} \le \|u(t,\cdot)\|_{L^\infty} \le \frac{\sqrt{2}}{2}\|u(t,\cdot)\|_{H^1} \le \frac{\sqrt{2}}{2}\|u_0\|_{H^1}e^{E(u_0,v_0)}t,\\
(ii)&\|v_x(t,\cdot)\|_{L^\infty} \le \|v(t,\cdot)\|_{L^\infty} \le \frac{\sqrt{2}}{2}\|v(t,\cdot)\|_{H^1} \le \frac{\sqrt{2}}{2}\|v_0\|_{H^1}e^{E(u_0,v_0)}t,\\
(iii)&\|u(t,\cdot)\|_{L^p},\|u_x(t,\cdot)\|_{L^p} \le \|m(t,\cdot)\|_{L^p} \le e^{Ct}\|m_0\|_{L^p},\\
(iv)&\|v(t,\cdot)\|_{L^p},\|v_x(t,\cdot)\|_{L^p} \le \|n(t,\cdot)\|_{L^p} \le e^{Ct}\|n_0\|_{L^p}.
\end{aligned}
\end{equation*}
\end{lemma}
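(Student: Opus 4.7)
My plan splits the four estimates into three groups, all ultimately driven by the sign-preservation of $m$ and $n$. The pointwise inequalities $\|u_x\|_{L^\infty}\le\|u\|_{L^\infty}$ and $\|v_x\|_{L^\infty}\le\|v\|_{L^\infty}$ rest on the observation that along a strong solution the equation $m_t+uvm_x+3u_xvm=0$ is a linear transport equation for $m$ in the given smooth coefficients $uv$ and $u_xv$; the method of characteristics keeps $m(t,\cdot)\ge 0$ on $[0,T]$ starting from $m_0\ge 0$, and the analogous argument delivers $n\ge 0$. From $u=g*m$ with $g(x)=\tfrac12 e^{-|x|}$ one then gets $u\ge 0$, and the kernel-level estimate $|g'(x)|\le g(x)$ propagates through the convolution to give $|u_x|=|g_x*m|\le g*m=u$; symmetrically $|v_x|\le v$. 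The middle inequality in (i)--(ii) is the one-dimensional embedding $\|f\|_{L^\infty}^2\le\|f\|_{L^2}\|f_x\|_{L^2}\le\tfrac12\|f\|_{H^1}^2$.

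For the $H^1$ growth bound I would differentiate directly:
\[
\tfrac12\tfrac{d}{dt}\|u\|_{H^1}^2=\int u_t(u-u_{xx})\,dx=\int u_t\,m\,dx.
\]
Using $(1-\partial_x^2)u_t=m_t=-3u_xvm-uvm_x$ together with the self-adjointness of the convolution with $g$, this rewrites as $\int u_t\,m\,dx=\int m_t\,u\,dx=\int(-3u_xvm-uvm_x)u\,dx$; a single integration by parts on the $m_x$ factor collapses it to
\[
\tfrac12\tfrac{d}{dt}\|u\|_{H^1}^2=-\int uu_xvm\,dx+\int u^2v_xm\,dx.
\]
Invoking $|u_x|\le u$ and $|v_x|\le v$ dominates the right-hand side by $2\int u^2vm\,dx\le 2\|u\|_{L^\infty}^2\int vm\,dx$. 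Since $\int vm\,dx=\int v(u-u_{xx})\,dx=E(u,v)$ and $E$ is conserved by Lemma 2.3, this is exactly $2E(u_0,v_0)\|u\|_{L^\infty}^2\le E(u_0,v_0)\|u\|_{H^1}^2$, and Gronwall delivers $\|u\|_{H^1}\le\|u_0\|_{H^1}e^{E(u_0,v_0)t}$; the $v$ side is symmetric.

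For (iii) and (iv) I multiply the $m$-equation by $m^{p-1}$, admissible for $1\le p<\infty$ precisely because $m\ge 0$, and integrate over $\mathbb{R}$. Integrating by parts on the transport term yields
\[
\tfrac{1}{p}\tfrac{d}{dt}\|m\|_{L^p}^p=\int\Bigl(\tfrac{1}{p}(uv)_x-3u_xv\Bigr)m^p\,dx,
\]
whose right-hand side is bounded by $C\bigl(\|u_x\|_{L^\infty}\|v\|_{L^\infty}+\|u\|_{L^\infty}\|v_x\|_{L^\infty}\bigr)\|m\|_{L^p}^p$. Parts (i) and (ii), already in hand, make all four sup-norms at most $e^{Ct}$, so Gronwall gives $\|m\|_{L^p}\le e^{Ct}\|m_0\|_{L^p}$, and $p=\infty$ follows by letting $p\to\infty$. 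The sandwich $\|u\|_{L^p},\|u_x\|_{L^p}\le\|m\|_{L^p}$ is Young's inequality applied to $u=g*m$ and $u_x=g_x*m$ with $\|g\|_{L^1}=\|g_x\|_{L^1}=1$.

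The main obstacle is the $H^1$ step. A naive Cauchy--Schwarz estimate of $\int u_t m\,dx$ would leave a factor $\|m\|_{L^2}$ on the right, coupling (i) to (iii) in a circular Gronwall loop. The trick is to use the sign hypothesis twice --- first to replace $|u_x|,|v_x|$ by $u,v$ pointwise, and then to recognize $\int vm\,dx$ as the conserved cross-energy $E(u_0,v_0)$ --- so that the prefactor becomes $\|u\|_{L^\infty}^2$, which the Sobolev embedding controls by $\|u\|_{H^1}^2$, allowing the Gronwall to close cleanly.
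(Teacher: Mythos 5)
Your proof is correct, and for the parts the paper actually proves it follows the same route: for (iii)--(iv) both you and the paper multiply the $m$-equation by $m^{p-1}$, integrate by parts to get a coefficient of the form $\tfrac1p(uv)_x-3u_xv$, close with Gronwall, and then obtain the sandwich $\|u\|_{L^p},\|u_x\|_{L^p}\le\|m\|_{L^p}$ from Young's inequality with $\|g\|_{L^1}=\|g_x\|_{L^1}=1$. The real difference is in (i)--(ii): the paper simply refers the reader to the proof of a lemma in reference [20] and gives no argument, whereas you supply a self-contained derivation --- sign preservation of $m,n$ along characteristics, the kernel inequality $|g_x|\le g$ giving $|u_x|\le u$ and $|v_x|\le v$, the identity $\int vm\,{\rm d}x=E(u,v)$ together with conservation of $E$, and the Agmon-type bound $\|f\|_{L^\infty}^2\le\|f\|_{L^2}\|f_x\|_{L^2}$. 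Your observation about why the $H^1$ estimate closes (replacing $|u_x|,|v_x|$ by $u,v$ so that the prefactor is the conserved cross-energy rather than $\|m\|_{L^2}$) is exactly the mechanism that makes the stated exponential rate $e^{E(u_0,v_0)t}$ come out, and is the content the paper leaves implicit. Two trivial points: the conservation law is Lemma 2.7 in this paper, not Lemma 2.3; and the passage $p\to\infty$ in (iii) uses that $m_0\in H^1\subset L^2\cap L^\infty$ so that $\|m_0\|_{L^p}\to\|m_0\|_{L^\infty}$, which is worth one line.
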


\begin{proof}
For (i) and (ii), one can follow the proof of lemma 2.7 in [x]. Now we illustrate the third inequality. By virtue of the first equation of 2NS, we have
\begin{equation*}
\begin{aligned}
\frac{\rm d}{{\rm d}t}\int_\mathbb{R}m^p{\rm d}x&=\int_\mathbb{R}m^{p-1}m_t{\rm d}x=-\int_\mathbb{R}m^{p-1}(uvm_x+3u_xmv){\rm d}x=\int_\mathbb{R}(uv_x-2u_xv)m^p{\rm d}x\\
& \le (2\|u_x\|_{L^\infty}\|v\|_{L^\infty}+\|u\|_{L^\infty}\|v_x\|_{L^\infty})\int_\mathbb{R}m^p{\rm d}x\le C_T\int_\mathbb{R}m^p{\rm d}x,
\end{aligned}
\end{equation*}
and take advantage of Young's inequality and Gronwall's inequality, we obtain
\begin{equation*}
\begin{aligned}
\|u(t,\cdot)\|_{L^p}&=\|(g*m)(t,\cdot)\|_{L^p} \ \le \ \|g(t,\cdot)\|_{L^1}\|m(t,\cdot)\|_{L^p}\ \le e^{Ct}\|m_0\|_{L^p},\\
\|u_x(t,\cdot)\|_{L^p}&=\|(g_x*m)(t,\cdot)\|_{L^p} \le \|g_x(t,\cdot)\|_{L^1}\|m(t,\cdot)\|_{L^p} \le e^{Ct}\|m_0\|_{L^p}.
\end{aligned}
\end{equation*}
Similarly, one can check that (iv) is also holds.
\end{proof}

\section{Global weak solution}
Inspired by Zheng and Yin [22], now we state our main results as following theorem.
\begin{theorem}
Suppose $m_0,n_0 \in L^1\cap L^2$ and $m_0,n_0 \ge 0,\forall x\in\mathbb{R}.$ Then 2NS has a unique global weak solution $(u,v) \in C(\mathbb{R}_+;H^1)\cap C^1(\mathbb{R}_+;L^2)$, and the potential $(m,n) \in L_{loc}^\infty(\mathbb{R}_+;L^1\cap L^2)$.
\end{theorem}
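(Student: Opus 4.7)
The plan is to construct the solution by smooth approximation and then pass to the limit, following the scheme of Constantin--Molinet~[18] and Zheng--Yin~[22] adapted to the coupled two-component structure, and to prove uniqueness via the mollification technique encoded in Lemmas~2.5 and~2.6. First, I would mollify the initial data: set $m_0^n=\rho_n*m_0$ and $n_0^n=\rho_n*n_0$, and define $u_0^n=g*m_0^n$, $v_0^n=g*n_0^n$. Since $g,\rho_n\ge 0$ and $m_0,n_0\ge 0$, the smoothed potentials are non-negative, smooth, and lie in every Sobolev space; hence $(u_0^n,v_0^n)\in H^s$ for every $s\ge 3$. Lemma~2.7 then yields a sequence of global strong solutions $(u^n,v^n)$ with non-negative potentials $m^n,n^n$, and $m_0^n\to m_0$, $n_0^n\to n_0$ in $L^1\cap L^2$.

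Next I would invoke Lemma~2.8, whose constants depend only on $T$ and on $\|u_0\|_{H^1}$, $\|v_0\|_{H^1}$, $\|m_0\|_{L^1\cap L^2}$, $\|n_0\|_{L^1\cap L^2}$, to obtain uniform bounds on $[0,T]$: $u^n,v^n$ are bounded in $L^\infty_TH^1$ and $m^n,n^n$ in $L^\infty_T(L^1\cap L^2)$, so $u^n_{xx},v^n_{xx}$ are bounded in $L^\infty_TL^2$. Reading off the convolution form~(2.1), each nonlinearity is either a product of two $L^\infty$ factors (giving $L^2\cap L^\infty$) or a convolution of $g, g_x\in L^1$ with such a product, so $u^n_t,v^n_t$ are uniformly bounded in $L^\infty_TL^2$. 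The Sobolev embedding $H^1([-R,R])\hookrightarrow C([-R,R])$ is compact; combining this with the equicontinuity-in-time estimate $\|u^n(t,\cdot)-u^n(s,\cdot)\|_{L^2}\le C_T|t-s|$ (and the analogue for $v^n$), a diagonal argument across $R=1,2,\dots$ extracts a subsequence converging to some $(u,v)$ uniformly on compact subsets of $[0,T]\times\mathbb{R}$, with the $H^1$ bound furnishing weak-$*$ limits for $u^n_x,v^n_x$ and for $m^n,n^n$. These modes of convergence let me pass to the limit in the weak formulation of Definition~2.2: the uniform convergence handles the quadratic products $uvu_x$, while the Young inequality makes the maps $f\mapsto g*f$ and $f\mapsto g_x*f$ continuous from $L^2$ to $L^2$, so the convolution terms pass as well; the limit inherits the Lemma~2.8 bounds and lies in $C(\mathbb{R}_+;H^1)\cap C^1(\mathbb{R}_+;L^2)$.

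For uniqueness, suppose $(u^{(1)},v^{(1)})$ and $(u^{(2)},v^{(2)})$ are two weak solutions with the same initial data, and set $U=u^{(1)}-u^{(2)}$, $V=v^{(1)}-v^{(2)}$. I would mollify by convolving the two weak equations with $\rho_n$ and subtracting, then apply Lemma~2.6 to differentiate $\|\rho_n*U\|_{L^2}^2+\|\rho_n*U_x\|_{L^2}^2+\|\rho_n*V\|_{L^2}^2+\|\rho_n*V_x\|_{L^2}^2$ in time. Lemma~2.5 allows me to replace $\rho_n*(fg)$ by $(\rho_n*f)(\rho_n*g)$ up to an $L^2$ error tending to $0$, so in the limit $n\to\infty$ every term can be estimated by $\|u^{(i)}\|_{L^\infty}, \|v^{(i)}\|_{L^\infty}, \|u^{(i)}_x\|_{L^\infty}, \|v^{(i)}_x\|_{L^\infty}$ (all finite by Lemma~2.8) multiplied by $\|U\|_{H^1}^2+\|V\|_{H^1}^2$; Gronwall closes the argument and forces $U\equiv V\equiv 0$.

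The main obstacle I expect is the uniqueness step rather than existence: the nonlinearity $uu_{xx}v_x$ appearing inside the convolution couples a second $x$-derivative of one component to a first $x$-derivative of the other, and to handle it after mollification I must either integrate by parts inside the convolution to move one derivative off of $u$, or commute $\rho_n$ past an $x$-derivative and control the resulting commutator with Lemma~2.5. Keeping track of the cross terms between the $u$-equation and the $v$-equation, so that the bad terms involving $V_x$ in the $u$-equation cancel (or are absorbed by) those involving $U_x$ in the $v$-equation, is the most delicate bookkeeping in the proof.
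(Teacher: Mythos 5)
Your overall architecture (mollify the data, use the global strong solutions of Lemma~2.7, extract limits, pass to the limit in the weak formulation, then prove uniqueness by mollifying the difference of two solutions and running Gronwall with Lemmas~2.5--2.6) matches the paper, and your uniqueness sketch is essentially the paper's Step~4. But there is a genuine gap in your existence argument: you only claim \emph{weak-$*$} limits for $u^n_x$, $v^n_x$ and $m^n$, $n^n$, and weak convergence is not enough to pass to the limit in the cubic nonlinearities. The convolution form contains $u_x^2v_x$ and $uu_{xx}v_x=u(u-m)v_x$, i.e.\ products of two or three factors each of which converges only weakly in your scheme; weak convergence of $u^n_x$ does not give $ (u^n_x)^2\rightharpoonup u_x^2$, and the term $u^nm^nv^n_x$ pairs the weak-$*$ convergent $m^n$ against $u^nv^n_x$, which must therefore converge \emph{strongly} (or a.e.\ with uniform bounds). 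Your uniform convergence of $u^n,v^n$ on compacta does not propagate to the derivatives, and the $L^\infty_TH^1$ bound alone cannot upgrade $u^n_x\rightharpoonup u_x$ to pointwise convergence.

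This is exactly the point where the paper works hardest and where the hypothesis $m_0,n_0\in L^1$ is actually used: Lemma~2.8(iii)--(iv) gives a uniform bound on $\|u^{k}_{xx}(t,\cdot)\|_{L^1}$, hence a uniform total variation bound on $u^{k}_x(t,\cdot)$; Helly's selection theorem plus a diagonal process gives pointwise convergence for $t$ in a countable dense set, and a uniform bound on $\|u^{k}_{xt}\|_{L^\infty((0,T);L^1)}$ (obtained by differentiating the convolution equation and using $\partial_x^2g*f=g*f-f$) extends this to every $t$, yielding $u^{k_j}_x\to u_x$ a.e.\ on $(0,T)\times\mathbb{R}$. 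Only with this a.e.\ convergence, combined with the weak-$*$ convergence of $m^{k_j}$ and Lemma~2.5, can one pass to the limit in terms like $u_xvm$. You should add this BV/Helly step (or an equivalent strong-compactness argument for the gradients); as written, your limit passage fails for every term containing two derivative factors or a derivative paired with $m$. A secondary omission: you assert the limit lies in $C(\mathbb{R}_+;H^1)$, but this requires the separate argument of the paper's Step~3 (weak continuity via Arzel\`a--Ascoli plus continuity of the $H^1$ norm from a mollified energy identity).
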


\begin{proof}

\textbf{Step 1.} Let $u_0=(1-\partial_x^2)^{-1}m_0$, $v_0=(1-\partial_x^2)^{-1}n_0$. As $m_0,n_0 \in L^2$, by virtue of Lemma 2.8 with $p=2$, we have $u_0,v_0 \in H^1$. Define $u_0^k=\rho_k*u_0$ and $v_0^k=\rho_k*v_0$ belong to $H^\infty$ for $k \in \mathbb{N}^*$. By the convolution approximation, it is clearly that
$$u_0^k \rightarrow u_0\ \ \ {\rm and}\ \ \ v_0^k \rightarrow v_0\ \ \ {\rm in}\  H^1\ \ \ {\rm as}\ \ \ k\rightarrow \infty.$$
Noting that $m_0^k=u_0^k-u_{0,xx}^k=\rho_k*m_0 \ge 0$, and $n_0^k=v_0^k-v_{0,xx}^k=\rho_k*n_0 \ge 0$, for $k\in\mathbb{N}^*$, it is obvious that
$$m_0^k \rightarrow m_0\ \ \ {\rm and}\ \ \ n_0^k \rightarrow n_0\ \ \ {\rm in}\ L^1\cap L^2\ \ \ {\rm as}\ \ \ k\rightarrow \infty.$$
As $m_0$ and $n_0$ are smooth function in $H^\infty$, through Lemma 2.7 we have unique global strong solutions $m^k,n^k$ of 2NS with initial data $m_0^k,n_0^k$, and $u^k=g*m^k$, $v^k=g*n^k$.

\textbf{Step 2.} For fixed $T>0$, by Lemma 2.8 and the fact $\|\rho_k*f\|_{W^{k,p}}\le\|\rho_k\|_{L^1}\|f\|_{W^{k,p}}$, we see $u^k,v^k$ is uniformly bounded in $L^\infty([0,T);H^1\cap W^{1,\infty})$, and $m^k,n^k$ is uniformly bounded in $L^\infty([0,T);L^1\cap L^2)$. In order to obtain the uniform boundedness of solution $u^k,v^k$ in $H^1((0,T)\times\mathbb{R})$ with initial data $u_0^k,v_0^k$, it remains to estimate $\|u_t^k(t,\cdot)\|_{L^2}$ and $\|v_t^k(t,\cdot)\|_{L^2}$. Rewriting the first equation of 2NS, we have
\begin{equation}
u_t^k+g_x*u^kv^km^k+2g*u_x^kv^km^k-g*u^kv_x^km^k=0.
\end{equation}
Taking advantage of Young's inequality yields
\begin{equation}
\begin{aligned}
&\|u_t^k(t,\cdot)\|_{L^2}\\
\le &\|(g_x*u^kv^km^k)(t,\cdot)\|_{L^2}+2\|(g*u_x^kv^km^k)(t,\cdot)\|_{L^2}+\|(g*u^kv_x^km^k)(t,\cdot)\|_{L^2}\\
\le &\|m^k(t,\cdot)\|_{L^2}\left(\|u^k(t,\cdot)\|_{L^\infty}\|v^k(t,\cdot)\|_{L^\infty}+2\|u_x^k(t,\cdot)\|_{L^\infty}\|v^k(t,\cdot)\|_{L^\infty}+\|u^k(t,\cdot)\|_{L^\infty}\|v_x^k(t,\cdot)\|_{L^\infty}\right)\\
\le &2\|m^k(t,\cdot)\|_{L^2}\|u^k(t,\cdot)\|_{W^{1,\infty}}\|v^k(t,\cdot)\|_{W^{1,\infty}},
\end{aligned}
\end{equation}
and the boundary of $\|v_t^k\|_{L^2}$ can be obtained in s similar way. Combining with the fact
$$\|u^k(t,\cdot)\|_{H^1} \le \|u(t,\cdot)\|_{H^1},\ \ \ \|v^k(t,\cdot)\|_{H^1} \le \|v(t,\cdot)\|_{H^1},$$
and Lemma 2.8, we see sequence $\{u^k,v^k\}_{k\in\mathbb{N}^*}$ is uniformly bounded in $H^1((0,T)\times\mathbb{R})$. Therefore, there exists a subsequence such that
\begin{equation}
\begin{aligned}
&(u^{k_j},v^{k_j}) \rightarrow (u,v)\ {\rm weakly\ in}\ H^1((0,T)\times\mathbb{R}),\ {\rm as}\ k_j\rightarrow \infty,\\
&(u^{k_j},v^{k_j}) \rightarrow (u,v)\ {\rm a.e.\ on}\ (0,T)\times\mathbb{R}, \ {\rm as}\ k_j\rightarrow \infty,
\end{aligned}
\end{equation}
for some $u,v\in H^1((0,T)\times\mathbb{R})$. By Lemma 2.7 and Lemma 2.8, we have that for fixed $t \in (0,T)$,
$$\mathbb{V}[u_x^{k_j}(t,\cdot)]=\|u_{xx}^{k_j}(t,\cdot)\|_{L^1}\le \|u^{k_j}(t,\cdot)\|_{L^1}+\|m^{k_j}(t,\cdot)\|_{L^1} \le 2\|m^{k_j}\|_{L^\infty((0,T);L^1)},$$
and
\begin{equation*}
\begin{aligned}
\|u_x^{k_j}(t,\cdot)\|_{L^\infty} &\le \frac{\sqrt{2}}{2}\|u^{k_j}(t,\cdot)\|_{H^1}\le \frac{\sqrt{2}}{2}\|u_0^{k_j}\|_{H^1}e^{E(u_0^{k_j},u_0^{k_j})t}\\
&\le \frac{\sqrt{2}}{2}\|u_0^{k_j}\|_{H^1}e^{\|u_0^{k_j}\|_{H^1}\|v_0^{k_j}\|_{H^1}t} \le \frac{\sqrt{2}}{2}\|u_0\|_{H^1}e^{\|u_0\|_{H^1}\|v_0\|_{H^1}t},
\end{aligned}
\end{equation*}
where $\mathbb{V}(f)$ is the total variation of function $f\in BV$. Applying Helly's theorem and the diagonal process, we obtain that there exists a subsequence, denoted again by $u_x^{k_j}(t, x)$, which converges pointwise for all $x \in \mathbb{R}$ and $t$ in a countable dense subset of $(0, T)$. In order to prove  $u_x^{k_j}(t, x)$ converges for a.e. $x$ and every $t \in (0, T)$, as $k_j \rightarrow \infty$, it will suffice to prove that $\|u_{xt}^{k_j}\|_{L^\infty((0,T);L^1)}$ is uniformly bounded. Differentiating equation (3) with respect to $x$, and together with the identity $\partial_x^2g*f=g*f-f$, we get
$$u_{xt}^k+g*u^kv^km^k-u^kv^km^k+2g_x*u_x^kv^km^k-g_x*u^kv_x^km^k=0.$$
Similar to the argument of (3), it is easy to check that there exists some constant $C_T>0$ such that
$$\|u_{xt}^{k_j}\|_{L^\infty((0,T);L^1)}\le C_T.$$
Thus we claim there exists some function such that $u_x^{k_j}(t,x)\rightarrow r(t,x)$ for a.e. $x\in \mathbb{R}$ and $\forall t\in (0,T)$ as $k_j\rightarrow \infty$. Since for almost all $(t,x)\in(0,T)\times\mathbb{R}$, $u_x^{k_j} \rightarrow u_x$ in $\mathcal{D}'((0,T)\times \mathbb{R})$, it follows that $r(t,x)=u_x(t,x)$ for a.e. $(t,x)\in(0,T)\times\mathbb{R}$. With a similar argument, we can also deduce that $v_x^{k_j} \rightarrow v_x$ a.e. on $(0,T)\times\mathbb{R}$. Therefore, we have
\begin{equation}
(u_x^{k_j}, v_x^{k_j})\rightarrow (u_x,v_x)\ \ \ {\rm a.e.\ on}\ (0,T)\times\mathbb{R}.
\end{equation}
Moreover, $u_x,v_x \in L^\infty((0,T)\times\mathbb{R})$.

Since $(m^k,n^k)$ is uniformly bounded in $L^\infty((0,T);L^1\cap L^2)$, there exists a subsequence such that
\begin{equation}
(m^{k_j},n^{k_j}) \rightarrow (m,n)\ \ \ {\rm weakly\  *\  in}\ L^\infty((0,T);L^1\cap L^2),\ \ \ {\rm as}\ k_j\rightarrow \infty.
\end{equation}

Next we show $m,n$ satisfy 2NS in distribution sense. We only deal with the term $u_xvm$ as an example, and the others are similar. By virtue of (5)-(7) and Lemma 2.5 , we see for any test function $\phi \in C_0^\infty((0,T)\times \mathbb{R})$,
\begin{equation*}
\begin{aligned}
&\int_0^T\int_\mathbb{R}\left(u_x^{k_j}v^{k_j}m^{k_j}-u_xvm\right)\phi{\rm d}x{\rm d}t\\
=&\int_0^T\int_\mathbb{R}m^{k_j}[u_x^{k_j}v^{k_j}-(u_xv)^{k_j}+(u_xv)^{k_j}-u_xv]\phi{\rm d}x{\rm d}t+\int_0^T\int_\mathbb{R}(m^{k_j}-m)u_xv\phi{\rm d}x{\rm d}t\\
\le&\|m^{k_j}\|_{L^\infty((0,T);L^2)}\|[u_x^{k_j}v^{k_j}-(u_xv)^{k_j}]\phi+[(u_xv)^{k_j}-u_xv]\phi\|_{L^1((0,T);L^2)}\\
&+\|m^{k_j}-m\|_{L^\infty((0,T);L^2)}\|u_xv\phi\|_{L^1((0,T);L^2)}\rightarrow 0,\ \ \ {\rm as}\ k_j\rightarrow \infty.
\end{aligned}
\end{equation*}

\textbf{Step 3.} Since for any fixed $T>0$, $(u^k,v^k)$ is bounded in $L^\infty((0,T);H^1)$, and $(u_t^k,v_t^k)$ is bounded in $L^\infty((0,T);L^2$ respectively, hence the map $t\rightarrow (u^k(t\,cdot), v^k(t,\cdot)) \in H^1$  is weakly equicontinuous on $[0,T]$. By virtue of Arzela-Ascoli theorem, we know there exists a subsequence of $(u^k(t,\cdot),v^k(t,\cdot))$ which converges weakly in $H^1$, uniformly in $t \in [0,T]$. The limit function is $(u,v)$. Since $T>0$ is arbitrary, we claim that
$$(u,v)\in C_w(\mathbb{R}_+;H^1).$$

In order to prove $u,v \in C(\mathbb{R}_+; H^1)$, it remains to show that $\|u(t,\cdot)\|_{H^1}$ and $\|v(t,\cdot)\|_{H^1}$ are continuous on $\mathbb{R}_+$. Since $(m,n)$ is a solution of 2NS in distributional sense, we see for a.e. $t\in \mathbb{R}_+$,
$$\rho_k*m_t+\rho_k*(uvm)_x-\rho_k*(uv_xm)+2\rho_k*(u_xvm)=0.$$
Multiplying with $\rho_k*u$ and integrating with respect to $x$ on $\mathbb{R}$, through Lemma 2.4, we have
\begin{equation}
\begin{aligned}
\frac{1}{2}\frac{\rm d}{{\rm d}t}\|\rho_k*u\|_{H^1}^2&=\int_\mathbb{R}\left[(\rho_k*u_x)(\rho_k*uvm)+(\rho_k*u)(\rho_kuv_xm)-2(\rho_k*u)(\rho_k*u_xvm)\right]{\rm d}x\\
&=G_k(t).
\end{aligned}
\end{equation}
Note that $u,v\in L^\infty((0,T);W^{1,\infty})$, $m,n\in L^\infty((0,T);L^1))$. Thus we also have $u,v\in L^\infty((0,T);W^{2,1}\hookrightarrow C_B^1)$, and for $t\in \mathbb{R}_+$, thanks to Lemma 2.5 we have
$$\lim_{k\rightarrow \infty}G_k(t)=\int_\mathbb{R}(u^2v_xm-uu_xvm){\rm d}\tau=G(t).$$
On the other hand, with H\"{o}lder inequality and Young's inequality, it is easy to check $|G_k(t)| \le C_T$ for $t\in (0,T)$, $k \in \mathbb{N}^*$. Thus integrating (8) with respect to $t$ and letting $k\rightarrow \infty$, we arrive at
$$\frac{1}{2}\|u(t,\cdot)\|_{H^1}^2-\frac{1}{2}\|u_0\|_{H^1}^2=\int_0^tG(\tau){\rm d}\tau.$$
This proves $u \in C(\mathbb{R}_+;H^1)$. With a similar argument, we also have $v \in C(\mathbb{R}_+;H^1)$. Because $u,v\in H^1((0,T)\times \mathbb{R})$, by Lemma 2.4 and (3), it is clear that $u,v\in C^1(\mathbb{R}_+;L^2)$.

\textbf{Step 4.} Finally, we prove the weak solution $(m,n)$ is unique. Suppose $(m_i,n_i)$ are two weak solutions of 2NS in $L_{loc}^\infty(\mathbb{R}_+;L^1\cap L^2)$, $i=1,2$, and the corresponding functions $(u_i,v_i)=(g*m_i,g*n_i) \in C(\mathbb{R}_+;H^1)\cap C^1(\mathbb{R}_+;L^2)$. Define the error of two solutions by $M=m_1-m_2$, $N=n_1-n_2$, $U=u_1-u_2$ and $V=v_1-v_2$, and we denote for any fixed $T>0$,
\begin{equation}
L=\sup_{t \in (0,T),i=1,2}\{\|m_i(t,\cdot)\|_{L^1\cap L^2}+\|n_i(t,\cdot)\|_{L^1\cap L^2}\},
\end{equation}
and with Young's inequality, we have
\begin{equation}
\|u_i(t,\cdot)\|_{W^{1,1}\cap W^{1,\infty}} \le \|m_i(t,\cdot)\|_{L^1} \le L,\ \ \ \|v_i(t,\cdot)\|_{W^{1,1}\cap W^{1,\infty}} \le \|n_i(t,\cdot)\|_{L^1} \le L.
\end{equation}

Substituting $u_i$, $v_i$, $m_i$ and $n_i$ into 2NS, it is easy to derive that the error terms $U$ and $U_x$ satisfy the following evolution equations
\begin{equation}
\begin{aligned}
&& U_t&+Uu_{1x}v_1+u_2U_xv_1+u_xu_{2x}V+g*\Big(3Uu_{1x}v_1+3u_2U_xv_1+3u_2u_{2x}V+(u_{1x}+u_{2x})U_xv_{1x}\\
&& &+u_{2x}^2V_x+\underline{U(u_1-m_1)v_{1x}}-u_{2x}U_xv_{1x}+u_2U_x(n_1-v_1)+u_2(u_2-m_2)V_x\Big)\\
&& &+g_x*\Big(Uu_{1x}v_{1x}+2\underline{u_2U_xv_{1x}}+u_xu_{2x}V_x\Big)=0,
\end{aligned}
\end{equation}

\begin{equation}
\begin{aligned}
&& U_{xt}&+U_xu_{1x}v_1+U(u_1-m_1)v_1+u_{2x}U_xv_1+\underline{u_2U_{xx}v_1}-u_2U_xv_{1x}+u_{2x}^2V+\underline{u_2(u_2-m_2)V}+g_x*\\
&& &\Big(3Uu_{1x}v_1+3u_2U_xv_1+3u_2u_{2x}V+(u_{1x}+u_{2x})U_xv_{1x}+u_{2x}^2V_x+U(u_1-m_1)v_{1x}-u_2U_xv_{1x}\\
&& &+u_2U_xv_{1x}+u_2U_x(n_1-v_1)+u_2(u_2-m_2)V_x\Big)+g*\Big(Uu_{1x}v_{1x}+2u_2U_xv_{1x}+u_2u_{2x}V_x\Big)=0.\\
\end{aligned}
\end{equation}

Our goal is to construct the following differential inequality
\begin{equation}
\begin{aligned}
&\frac{\rm d}{{\rm d}t}\|\rho_k*U\|_{L^2}+\frac{\rm d}{{\rm d}t}\|\rho_k*U_x\|_{L^2}+\frac{\rm d}{{\rm d}t}\|\rho_k*V\|_{L^2}+\frac{\rm d}{{\rm d}t}\|\rho_k*V_x\|_{L^2}\\
\le &C\left(\|\rho_k*U\|_{L^2}+\|\rho_k*U_x\|_{L^2}+\|\rho_k*V\|_{L^2}+\|\rho_k*V_x\|_{L^2}\right)+R_k(t),
\end{aligned}
\end{equation}
where
\begin{equation}
R_k(t)\rightarrow 0\ \ \  {\rm as}\ \ \  k\rightarrow \infty, \ \ \ {\rm  and}\ \ \  |R_k(t)| \le C_K,\ \ \  \forall t \in (0,T), k \in \mathbb{N}^*.
\end{equation}

In order to obtain (13), we should convolute (11) and (12) with $\rho_k$ and take inner product with $|\rho_k*U|{\rm sgn}(\rho_k*U)$ and $|\rho_k*U_x|{\rm sgn}(\rho_k*U_x)$ respectively. Due to the estimates are huge and cumbersome, here we only offer details for some special terms which are underlined.

Line 1:
\begin{equation*}
\begin{aligned}
&\int_\mathbb{R}|\rho_k*U|{\rm sgn}(\rho_k*U)\rho_k*g*(Um_1v_{1x}){\rm d}x\\
\le &\|\rho_k*U\|_{L^2}\|\rho_k*g*(Um_1v_{1x})\|_{L^2}\\
\le &\|\rho_k*U\|_{L^2}\|(\rho_k*U)(\rho_k*m_1)(\rho_k*v_{1x})\|_{L^1}+R_k(t)\\
\le &\|\rho_k*U\|_{L^2}\|\rho_k*U\|_{L^2}\|\rho_k*m_1\|_{L^2}\|\rho_k*v_{1x}\|_{L^\infty}+R^1_k(t)\\
\le &C\|\rho_k*U\|_{L^2}^2+R_k(t),
\end{aligned}
\end{equation*}
where
$$R^1_k(t)=\|\rho_k*U\|_{L^2}\left(\|\rho_k*(Um_1v_{1x})\|_{L^1}-\|(\rho_k*U)(\rho_k*m_1)(\rho_k*v_{1x})\|_{L^1}\right).$$
As $m_1 \in L^1$ and $U,v_1 \in W^{1,\infty}$, we see $R^1_k$ satisfies (14) via Lemma 2.5.

Line 2:
\begin{equation*}
\begin{aligned}
&\int_\mathbb{R}|\rho_k*U|{\rm sgn}(\rho_k*U)\rho_k*g_x*(u_2U_xv_{1x}){\rm d}x\\
\le &\|\rho_k*U\|_{L^2}\|\rho_k*(U_xu_2v_{1x})\|_{L^2}\\
=&\|\rho_k*U\|_{L^2}\|(\rho_k*U_x)(\rho_k*u_2v_{1x})\|_{L^2}+R^2_k(t)\\
\le &\|\rho_k*U\|_{L^2}\|\rho_k*U_x\|_{L^2}\|\rho_k*u_2\|_{L^\infty}\|\rho_k*v_{1x}\|_{L^\infty}+R^2_k(t)\\
\le &C\|\rho_k*U\|_{L^2}\|\rho_k*U_x\|_{L^2}+R^2_k(t),
\end{aligned}
\end{equation*}
with
$$R^2_k(t)=\|\rho_k*U\|_{L^2}\left(\|\rho_k*(U_xu_2v_{1x})\|_{L^2}-\|(\rho_k*U_x)(\rho_k*u_2)(\rho_k*v_{1x})\|_{L^2}\right),$$
which is also tends to zero as $k \rightarrow \infty$ by virtue of $U,u_1,v_1 \in H^1$.

Line 3:
\begin{equation*}
\begin{aligned}
&\int_\mathbb{R}|\rho_k*U_x|{\rm sgn}(\rho_k*U_x)\rho_k*(U_{xx}u_2v_1){\rm d}x\\
=&\int_\mathbb{R}|\rho_k*U_x|{\rm sgn}(\rho_k*U_x)(\rho_k*U_{xx}(\rho_k*u_2)(\rho_k*v_1){\rm d}x+R^3_k(t)\\
=&\frac{1}{2}\int_\mathbb{R}\frac{\partial}{\partial x}|\rho_k*U_x|^2(\rho_x*u_2)(\rho_k*v_1){\rm d}x+R^3_k(t)\\
=&-\frac{1}{2}\int_\mathbb{R}|\rho_k*U_x|^2\left[(\rho_k*u_{2x})(\rho_k*v_1)+(\rho_k*u_2)(\rho_k*v_{1x})\right]{\rm d}x+R^3_k(t)\\
\le &\frac{1}{2}\int_\mathbb{R}|\rho_k*U_x|^2\|\rho_k*u_2\|_{W^{1,\infty}}\|\rho_k*v_1\|_{W^{1,\infty}}+R^3_k(t),
\end{aligned}
\end{equation*}
with
$$R^3_k(t)=\int_\mathbb{R}|\rho_k*U_x|{\rm sgn}(\rho_k*U_x)\left[\rho_k*(U_{xx}u_2v_1)-(\rho_k*U_{xx})(\rho_k*u_2)(\rho_k*v_1)\right]{\rm d}x.$$

Line 4:
\begin{equation*}
\begin{aligned}
&\int_\mathbb{R}|\rho_k*U_x|{\rm sgn}(\rho_k*U_x)\rho_k*(u_2m_2V){\rm d}x\\
\le &\|\rho_k*U_x\|_{L^2}\|\rho_k*(u_2m_2V)\|_{L^2}\\
=& \|\rho_k*U_x\|_{L^2}\|(\rho_k*u_2)(\rho_k*m_2)(\rho_k*V)\|_{L^2}+R^4_k(t)\\
\le &\|\rho_k*U_x\|_{L^2}\|\rho_k*u_2\|_{L^\infty}\|\rho_k*m_2\|_{L^2}\|\rho_k*V\|_{L^\infty}+R^4_k(t)\\
\le &C\|\rho_k*U_x\|_{L^2}\|\rho_k*V\|_{H^1}+R^4_k(t),
\end{aligned}
\end{equation*}
with
$$R^4_k(t)=\|\rho_k*U_x\|_{L^2}\left(\|\rho_k*(u_2m_2V)\|_{L^2}-\|(\rho_k*u_2)(\rho_k*m_2)(\rho_k*V)\|_{L^2}\right).$$

For $V$ and $V_x$, one can following analogous estimates as above to check that both $\dfrac{\rm d}{{\rm d}x} \|\rho_k*V\|_{L^2}$ and
$\dfrac{\rm d}{{\rm d}x} \|\rho_k*V_x\|_{L^2}$ can be bounded by the right hand side of (13). Denote
$$A_k(t)=\|\rho_k*U\|_{L^2}+\|\rho_k*U_x\|_{L^2}+\|\rho_k*V\|_{L^2}+\|\rho_k*V_x\|_{L^2},$$
applying Young's inequality to (13) yields

\begin{equation*}
\begin{aligned}
A_k(t)=e^{Ct}\left(A_k(0)+\int_0^te^{-Cs}R_k(s){\rm d}s\right).
\end{aligned}
\end{equation*}
Since $R_k(t)$ satisfies (14) and $A_k(0)=0$, the uniqueness is obtained by letting $k \rightarrow \infty$.

\end{proof}

\begin{remark}
In the proof of Theorem 3.1, in order to obtain (5), we considered the total variation of $u_x^{k_j}$ and $v_x^{k_j}$, this is the reason we have to restrict $m_0,n_0 \in L^1$, and the uniqueness is guaranteed since our assumption is $m_0$ and $n_0$ also belong to $L^2$, and actually $p=2$ is the critical value of $L^p$ ($p \ge 2$) such that we can deduce the uniqueness of global weak solutions. More details can be found in the reference [22].
\end{remark}

\begin{remark}
As we have shown in introduction, 2NS admits peakon and periodic peakon solutions, and it is obvious $m_c(t,x)=n_c(t,x)=(1-\partial_x^2)\sqrt{c}e^{-|x-ct|}$ are not $L^p$ integrable function. Thus, how to weaken the conditions of $m_0$ and $n_0$ is still an unsolved problem.
\end{remark}

\end{document}